\newtheorem{theorem}{Theorem}
\newtheorem{corollary}[theorem]{Corollary}
\newtheorem{example}[theorem]{Example}
\newtheorem{lemma}[theorem]{Lemma}
\newenvironment{proof}[1][Proof]{\noindent\textbf{#1.} }{\ \rule{0.5em}{0.5em}}
\begin{document}

\title{Integer powers of complex anti-tridiagonal matrices and some complex
factorizations}
\author{Durmu\c{s} Bozkurt\thanks{%
Corresponding author, dbozkurt@selcuk.edu.tr} \& H. K\"{u}bra Duru\thanks{%
hkduru@selcuk.edu.tr} \\
Department of Mathematics, Science Faculty of Sel\c{c}uk University}
\maketitle

\begin{abstract}
In this paper, we obtain a general expression for the entries of the $r$th
power of a certain $n\times n$ complex anti-tridiagonal matrix where if $n$
is odd, $r\in \mathbb{Z}$ or if $n$ is even, $r\in \mathbb{N}$. In addition,
we get the complex factorizations of Fibonacci polynomials, Fibonacci and
Pell numbers.
\end{abstract}

\section{Introduction}

\bigskip Arbitrary integer powers of a square matrix is used to solve some
difference equations, differential and delay differential equations and
boundary value problems.

Recently, the calculations eigenvalues and integer powers of
anti-tridiagonal matrices have been well studied in the literature. For
instance, Rimas [1-3] obtained the integer powers of anti-tridiagonal
matrices of odd and even order. \"{O}tele\c{s} and Akbulak [8-10]
generalized Rimas's the some results and obtained complex factorizations.
Guti\'{e}rrez [11] calculated the powers of complex persymmetric or
skew-persymmetric anti-tridiagonal matrices with costant anti-diagonals. For
details on the eigenvalues and the powers of tridiagonal and
anti-tridiagonal matrices, see [4-7, 12]. We obtained integer powers of the
tridiagonal matrix%
\begin{equation}
\widetilde{A}_{n}=\left\{ 
\begin{array}{l}
\widetilde{a}_{11}=\widetilde{a}_{nn}=a \\ 
\widetilde{a}_{12}=\widetilde{a}_{n,n-1}=2b \\ 
\widetilde{a}_{21}=\widetilde{a}_{n-1,n}=b \\ 
tridiag(-b,a,-b),\ other%
\end{array}%
\right.  \label{1}
\end{equation}

as

\begin{eqnarray}
\widetilde{a}_{ij}(r) &=&\frac{1}{2n-2}\left( \lambda
_{2}^{r}T_{i-1}(m_{2})T_{j-1}(m_{2})+\lambda
_{3}^{r}T_{i-1}(m_{3})T_{j-1}(m_{3})\right.   \label{2} \\
&&+2\dsum\limits_{\underset{k\neq 2,3}{k=1}}^{n}\lambda
_{k}^{r}T_{i-1}(m_{k})T_{j-1}(m_{k}));\ i=1,\ldots ,n;\ j=1,n  \notag
\end{eqnarray}%
and%
\begin{eqnarray}
\widetilde{a}_{ij}(r) &=&\frac{(-1)^{j}}{n-1}\left( 
\begin{array}{c}
\\ 
(\lambda _{2}^{r}T_{i-1}(m_{2})T_{j-1}(m_{2})+\lambda
_{3}^{r}T_{i-1}(m_{3})T_{j-1}(m_{3})) \\ 
\end{array}%
\right.   \label{3} \\
&&\left. +2\dsum\limits_{\underset{k\neq 2,3}{k=1}}^{n}\lambda
_{k}^{r}T_{i-1}(m_{k})T_{j-1}(m_{k})\right) ;i=1,\ldots ,n;\ j=2,\ldots ,n-1
\notag
\end{eqnarray}%
where$\ m_{k}=\frac{\lambda _{k}-a}{2b}$ for $\lambda _{k}$ is the $k$-th
eigenvalue of $\widetilde{A}_{n}$ see [14, p. 3]\ and $T_{j}(.)$ is the $j-$%
th degree Chebyshev polynomial of the first kind [11, p. 14]. \"{O}tele\c{s}
et al. computed integer powers of certain complex tridiagonal matrix%
\begin{equation*}
\widetilde{B}_{n}=\left\{ 
\begin{array}{l}
\widetilde{b}_{11}=\widetilde{b}_{nn}=a+b \\ 
\widetilde{b}_{12}=\widetilde{b}_{21}=\widetilde{b}_{n-1,n}=\widetilde{b}%
_{n,n-1}=b \\ 
tridiag(b,a,b),\ other.%
\end{array}%
\right. [9,\ p.67]
\end{equation*}%
as%
\begin{equation}
\widetilde{b}_{ij}^{r}=\dsum\limits_{k=1}^{n}f_{k}\lambda _{k}^{r}T_{\frac{%
2i-1}{2}}\left( \frac{\lambda _{k}-a}{2b}\right) T_{\frac{2j-1}{2}}\left( 
\frac{\lambda _{k}-a}{2b}\right) \text{; }i,j=1,\ldots ,n  \label{4}
\end{equation}%
where $T_{j}(.)$ is the $j-$th degree Chebyshev polynomial of the first kind
[11, p. 14], $\lambda _{k}$\ is the $k$-th eigenvalue of the matrix $%
\widetilde{B}_{n}$ and 
\begin{equation*}
f_{k}=\left\{ 
\begin{array}{l}
\frac{2}{n},\ \text{if }k=1,\ldots ,n-1 \\ 
\frac{1}{n},\text{ if }k=n.%
\end{array}%
\right. 
\end{equation*}

Let

\begin{equation}
A_{n}:=\left( 
\begin{array}{cccccc}
&  &  &  & 2b & a \\ 
&  &  & -b & a & b \\ 
&  & {\mathinner{\mkern2mu\raise1pt\hbox{.}\mkern2mu
\raise4pt\hbox{.}\mkern2mu\raise7pt\hbox{.}\mkern1mu}} & a & -b &  \\ 
& -b & {\mathinner{\mkern2mu\raise1pt\hbox{.}\mkern2mu
\raise4pt\hbox{.}\mkern2mu\raise7pt\hbox{.}\mkern1mu}} & {%
\mathinner{\mkern2mu\raise1pt\hbox{.}\mkern2mu
\raise4pt\hbox{.}\mkern2mu\raise7pt\hbox{.}\mkern1mu}} &  &  \\ 
b & a & -b &  &  &  \\ 
a & 2b &  &  &  & 
\end{array}%
\right)  \label{5}
\end{equation}%
and%
\begin{equation}
B_{n}:=\left( 
\begin{array}{cccccc}
&  &  &  & b & a+b \\ 
&  &  & b & a & b \\ 
&  & b & a & b &  \\ 
& {\mathinner{\mkern2mu\raise1pt\hbox{.}\mkern2mu
\raise4pt\hbox{.}\mkern2mu\raise7pt\hbox{.}\mkern1mu}} & {%
\mathinner{\mkern2mu\raise1pt\hbox{.}\mkern2mu
\raise4pt\hbox{.}\mkern2mu\raise7pt\hbox{.}\mkern1mu}} & {%
\mathinner{\mkern2mu\raise1pt\hbox{.}\mkern2mu
\raise4pt\hbox{.}\mkern2mu\raise7pt\hbox{.}\mkern1mu}} &  &  \\ 
b & a & b &  &  &  \\ 
a+b & b &  &  &  & 
\end{array}%
\right)  \label{6}
\end{equation}%
be the anti-tridiagonal matrices, where $b\neq 0$ and $a,b\in 
\mathbb{C}
$. In this paper, we obtain the integer powers of the $n\times n$ complex
anti-tridiagonal matrices in (5) and (6). We also get the complex
factorizations of Fibonacci polynomials, Fibonacci and Pell numbers using
the eigenvalues of the matrices $A_{n}$ and $B_{n}$.

\section{General Expression of $A_{n}^{r}$}

In this section, we obtain a general expression for the entries of the $r$%
-th powers of an $n\times n$ complex anti-tridiagonal matrix $A_{n}$ and $%
B_{n}$\ in (5) and (6) where if $n$ is even, $r\in \mathbb{N}$ or $n$ is
odd, $r\in \mathbb{Z}$.

\begin{lemma}
Let $a,0\neq b\in 
\mathbb{C}
,$ $n\in \mathbb{N}\ $and%
\begin{equation}
\widetilde{A}_{n}=\left( 
\begin{array}{ccccccc}
a & 2b &  &  &  &  &  \\ 
b & a & b &  &  &  &  \\ 
& -b & a & -b &  &  &  \\ 
&  & -b & \ddots & \ddots &  &  \\ 
&  &  & \ddots & a & -b &  \\ 
&  &  &  & -b & a & b \\ 
&  &  &  &  & 2b & a%
\end{array}%
\right) ,  \label{7}
\end{equation}%
\begin{equation}
\widetilde{B}_{n}=\left( 
\begin{array}{ccccccc}
a+b & b &  &  &  &  &  \\ 
b & a & b &  &  &  &  \\ 
& b & a & b &  &  &  \\ 
&  & b & \ddots & \ddots &  &  \\ 
&  &  & \ddots & a & b &  \\ 
&  &  &  & b & a & b \\ 
&  &  &  &  & b & a+b%
\end{array}%
\right)  \label{8}
\end{equation}%
and%
\begin{equation}
J_{n}=\left( 
\begin{array}{ccccc}
&  &  &  & 1 \\ 
&  &  & 1 &  \\ 
&  & {\mathinner{\mkern2mu\raise1pt\hbox{.}\mkern2mu
\raise4pt\hbox{.}\mkern2mu\raise7pt\hbox{.}\mkern1mu}} &  &  \\ 
& 1 &  &  &  \\ 
1 &  &  &  & 
\end{array}%
\right) .  \label{9}
\end{equation}%
Then%
\begin{equation*}
A_{n}=J_{n}\widetilde{A}_{n}=\widetilde{A}_{n}J_{n}
\end{equation*}%
and%
\begin{equation*}
B_{n}=J_{n}\widetilde{B}_{n}=\widetilde{B}_{n}J_{n}
\end{equation*}
\end{lemma}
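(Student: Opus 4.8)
The plan is to reduce all four identities to the single elementary fact that multiplying by $J_{n}$ reverses an order: left-multiplication reverses the rows of a matrix and right-multiplication reverses its columns. Since $(J_{n})_{ik}=1$ precisely when $k=n+1-i$ and vanishes otherwise, for any $n\times n$ matrix $M=(m_{ij})$ one has $(J_{n}M)_{ij}=m_{n+1-i,\,j}$ and $(MJ_{n})_{ij}=m_{i,\,n+1-j}$. I would record these two formulas first, as they drive everything that follows.

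The commutation identities $J_{n}\widetilde{A}_{n}=\widetilde{A}_{n}J_{n}$ and $J_{n}\widetilde{B}_{n}=\widetilde{B}_{n}J_{n}$ I would obtain structurally rather than by brute force. Equating the two displayed formulas shows that $J_{n}M=MJ_{n}$ holds if and only if $m_{ij}=m_{n+1-i,\,n+1-j}$ for all $i,j$, that is, if and only if $M$ is centrosymmetric. It therefore suffices to check that $\widetilde{A}_{n}$ and $\widetilde{B}_{n}$ are invariant under the central reflection $(i,j)\mapsto(n+1-i,\,n+1-j)$. In the interior this is immediate, the constant diagonal $a$ and the constant off-diagonal bands being carried to one another; at the ends one checks that the exceptional entries occur in centrally symmetric pairs, namely the $2b$ at $(1,2)$ and $(n,n-1)$ together with the $b$ at $(2,1)$ and $(n-1,n)$ for $\widetilde{A}_{n}$, and the $a+b$ at $(1,1)$ and $(n,n)$ for $\widetilde{B}_{n}$. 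This settles the two commutation identities simultaneously.

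It then remains only to identify one of the products with the anti-tridiagonal target. By row reversal, row $i$ of $J_{n}\widetilde{A}_{n}$ equals row $n+1-i$ of $\widetilde{A}_{n}$, so reading off the rows of $\widetilde{A}_{n}$ from the bottom upward I would verify that the resulting array is precisely $A_{n}$ of (\ref{5}), and in the same way that $J_{n}\widetilde{B}_{n}$ reproduces $B_{n}$ of (\ref{6}). For the interior this comparison is purely mechanical, since reversing the rows sends the tridiagonal band onto the anti-tridiagonal band in exactly the required positions.

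I expect the one genuinely delicate point to be the bookkeeping of the boundary entries under the reversal $i\mapsto n+1-i$. One must confirm, for instance, that the $2b$ standing in the last row of $\widetilde{A}_{n}$ is relocated to row $1$, column $n-1$, of $A_{n}$ exactly as (\ref{5}) prescribes, that the anomalous $b$'s adjacent to the corners fall into their assigned places, and that the interior sign pattern of the $-b$'s is transported unchanged; the corresponding check for the $a+b$ corners of $\widetilde{B}_{n}$ against $B_{n}$ is entirely analogous. Once this handful of corner entries has been matched, the lemma follows.
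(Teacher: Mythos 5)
Your argument is correct, and it is worth noting at the outset that it is genuinely \emph{more} than the paper offers: the paper's entire proof of this lemma is the citation ``See [7]'' (Wang's article on powers of complex persymmetric anti-tridiagonal matrices), so no argument is given in the text at all. Your route --- recording $(J_{n}M)_{ij}=m_{n+1-i,\,j}$ and $(MJ_{n})_{ij}=m_{i,\,n+1-j}$, observing that $J_{n}M=MJ_{n}$ holds precisely when $M$ is centrosymmetric, i.e. $m_{ij}=m_{n+1-i,\,n+1-j}$, and then matching the row-reversed $\widetilde{A}_{n}$ and $\widetilde{B}_{n}$ entry-by-entry against (5) and (6) --- is the standard, fully elementary verification, and it buys what the bare citation does not: it makes the boundary bookkeeping explicit and checkable. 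One caution about that bookkeeping: if you run your centrosymmetry check literally against the matrix as displayed in (7), it \emph{fails} at the entry $(2,3)$, which is typeset there as $b$, while its central reflection $(n-1,n-2)$ is $-b$. This is a typo in the display (7) rather than a flaw in your argument: the defining description (1) of $\widetilde{A}_{n}$ (tridiagonal band $(-b,a,-b)$ away from the six exceptional entries $\widetilde{a}_{11}=\widetilde{a}_{nn}=a$, $\widetilde{a}_{12}=\widetilde{a}_{n,n-1}=2b$, $\widetilde{a}_{21}=\widetilde{a}_{n-1,n}=b$) gives $\widetilde{a}_{23}=-b$, and the display of $A_{n}$ in (5), whose row $n-1$ reads $b,\ a,\ -b$, confirms this. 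Your list of exceptional centrally symmetric pairs presupposes exactly this corrected reading, under which $\widetilde{A}_{n}$ is indeed centrosymmetric and the whole proof goes through; it would strengthen your write-up to flag the discrepancy between (1) and (7) explicitly rather than silently resolving it.
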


\begin{proof}
See [7].
\end{proof}

\begin{lemma}
\bigskip Let $A_{n}$ be an $n\times n$ complex anti-tridiagonal matrix given
by (5). Then%
\begin{equation}
A_{n}^{r}=\left\{ 
\begin{array}{l}
J_{n}\widetilde{A}_{n}^{r},\ r\text{ is odd} \\ 
\widetilde{A}_{n}^{r},\ \ \ \ r\text{ is even.}%
\end{array}%
\right.  \label{10}
\end{equation}
\end{lemma}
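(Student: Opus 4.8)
The plan is to leverage the factorization already established in the first Lemma, namely $A_n = J_n \widetilde{A}_n = \widetilde{A}_n J_n$, together with the defining property of the exchange (anti-diagonal) matrix $J_n$, which is an involution: $J_n^2 = I_n$. The key observation is that because $J_n$ commutes with $\widetilde{A}_n$ in the specific sense that $J_n \widetilde{A}_n = \widetilde{A}_n J_n$, I can freely slide factors of $J_n$ past factors of $\widetilde{A}_n$ when expanding the product $A_n^r = (J_n \widetilde{A}_n)^r$.
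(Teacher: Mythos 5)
Your proposal is correct and rests on exactly the same ingredients as the paper's proof: the factorization $A_n=J_n\widetilde{A}_n=\widetilde{A}_nJ_n$ from the preceding Lemma together with the involution $J_n^2=I_n$, which yield $A_n^r=(J_n\widetilde{A}_n)^r=J_n^r\widetilde{A}_n^r$ and hence the stated parity dichotomy. The paper merely packages this same sliding argument as an explicit induction on $r$, so the two proofs are essentially identical in content.
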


\begin{proof}
We will show by induction on $r$. The case $r=1$ is clear. Suppose that the
equality (10) is true for $r>1.$ Now let us show the equality (10) is true
for $r+1.$ By the induction hypothesis we have%
\begin{equation*}
A_{n}^{r+1}=\left\{ 
\begin{array}{l}
\widetilde{A}_{n}J_{n}\widetilde{A}_{n}^{r},\ \ \ \ \ r+1\text{ is odd} \\ 
\widetilde{A}_{n}J_{n}\widetilde{A}_{n}^{r}J_{n},\ r+1\text{ is even.}%
\end{array}%
\right.
\end{equation*}%
Since $A_{n}=\widetilde{A}_{n}J_{n},$ we obtain%
\begin{equation*}
A_{n}^{r+1}=\left\{ 
\begin{array}{l}
J_{n}\widetilde{A}_{n}^{r+1},\ r+1\text{ is odd} \\ 
\widetilde{A}_{n}^{r+1},\ \ \ \ r+1\text{ is even.}%
\end{array}%
\right.
\end{equation*}
\end{proof}

The same proof can be done easily for the matrix $B_{n}$.

\begin{theorem}
\bigskip Let $A_{n}$ be an $n\times n$ complex anti-tridiagonal matrix given
by (5). If $n$ is odd, then the r-th power of $A_{n}$ is 
\begin{eqnarray}
a_{n-i+1,j}^{r} &=&\frac{1}{2n-2}\left( \lambda
_{2}^{r}T_{n-i}(m_{2})T_{j-1}(m_{2})+\lambda
_{3}^{r}T_{n-i}(m_{3})T_{j-1}(m_{3})\right.  \notag \\
&&+2\dsum\limits_{\underset{k\neq 2,3}{k=1}}^{n}\lambda
_{k}^{r}T_{n-i}(m_{k})T_{j-1}(m_{k}));\ i=1,\ldots ,n;\ j=1,n  \label{11}
\end{eqnarray}%
and%
\begin{eqnarray}
a_{n-i+1,j}^{r} &=&\frac{(-1)^{j}}{n-1}\left( 
\begin{array}{c}
\\ 
\lambda _{2}^{r}T_{n-i}(m_{2})T_{j-1}(m_{2})+\lambda
_{3}^{r}T_{n-i}(m_{3})T_{j-1}(m_{3}) \\ 
\end{array}%
\right.  \notag \\
&&\left. +2\dsum\limits_{\underset{k\neq 2,3}{k=1}}^{n}\lambda
_{k}^{r}T_{n-i}(m_{k})T_{j-1}(m_{k})\right)  \label{12}
\end{eqnarray}%
for $i=1,\ldots ,n;\ j=2,\ldots ,n-1$\ and if $n$ is even, then the $r$-th
power of $A_{n}$ is%
\begin{eqnarray}
a_{i,j}^{r} &=&\frac{1}{2n-2}\left( \lambda
_{2}^{r}T_{i-1}(m_{2})T_{j-1}(m_{2})+\lambda
_{3}^{r}T_{i-1}(m_{3})T_{j-1}(m_{3})\right.  \notag \\
&&+2\dsum\limits_{\underset{k\neq 2,3}{k=1}}^{n}\lambda
_{k}^{r}T_{i-1}(m_{k})T_{j-1}(m_{k}));\ i=1,\ldots ,n;\ j=1,n  \label{13}
\end{eqnarray}%
and%
\begin{eqnarray}
a_{i,j}^{r} &=&\frac{(-1)^{j}}{n-1}\left( 
\begin{array}{c}
\\ 
\lambda _{2}^{r}T_{i-1}(m_{2})T_{j-1}(m_{2})+\lambda
_{3}^{r}T_{i-1}(m_{3})T_{j-1}(m_{3}) \\ 
\end{array}%
\right.  \notag \\
&&\left. +2\dsum\limits_{\underset{k\neq 2,3}{k=1}}^{n}\lambda
_{k}^{r}T_{i-1}(m_{k})T_{j-1}(m_{k})\right)  \label{14}
\end{eqnarray}%
for $i=1,\ldots ,n;\ j=2,\ldots ,n-1.$
\end{theorem}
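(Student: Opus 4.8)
The plan is to reduce the entire statement to the already-known power formula for the tridiagonal matrix $\widetilde{A}_{n}$ and then transport it through the exchange matrix $J_{n}$. The starting point is Lemma 2, which I would use in the compact form $A_{n}^{r}=J_{n}^{r}\widetilde{A}_{n}^{r}$: since $J_{n}$ and $\widetilde{A}_{n}$ commute (Lemma 1) and $J_{n}^{2}=I_{n}$, we have $A_{n}^{r}=(J_{n}\widetilde{A}_{n})^{r}=J_{n}^{r}\widetilde{A}_{n}^{r}$, which equals $\widetilde{A}_{n}^{r}$ when $r$ is even and $J_{n}\widetilde{A}_{n}^{r}$ when $r$ is odd. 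Thus the whole content of the theorem is a matter of reading off the entries of $\widetilde{A}_{n}^{r}$ and applying at most one left-multiplication by $J_{n}$; no new eigenstructure needs to be computed, because $\lambda_{k}$ and $m_{k}=\tfrac{\lambda_{k}-a}{2b}$ are inherited verbatim from $\widetilde{A}_{n}$.

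Second, I would recall the closed form for the entries $\widetilde{a}_{ij}(r)$ of $\widetilde{A}_{n}^{r}$, namely (2) for the boundary columns $j\in\{1,n\}$ with prefactor $\tfrac{1}{2n-2}$, and (3) for the interior columns $j\in\{2,\ldots,n-1\}$ with prefactor $\tfrac{(-1)^{j}}{n-1}$; both carry the Chebyshev factors $T_{i-1}(m_{k})T_{j-1}(m_{k})$, with the exceptional indices $k=2,3$ weighted singly and all other $k$ doubled. The only remaining operation is the reflection produced by $J_{n}$: left-multiplication by $J_{n}$ reverses the row order, so $(J_{n}M)_{ij}=m_{\,n-i+1,\,j}$, and hence for odd $r$ the $(i,j)$ entry of $A_{n}^{r}$ equals $\widetilde{a}_{\,n-i+1,\,j}(r)$. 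This substitution replaces the row-index factor $T_{i-1}(m_{k})$ by $T_{\,n-i}(m_{k})$ while leaving $T_{j-1}(m_{k})$, the prefactors and the sign $(-1)^{j}$ intact. Inserting (2) and (3) and keeping the two column regimes separate then produces exactly the reflected display (11)--(12) and the unreflected display (13)--(14).

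Finally, the split according to the parity of $n$ I would justify through the spectrum of $\widetilde{A}_{n}$: the parity of $n$ governs whether $0$ is an eigenvalue, hence whether $A_{n}$ is invertible, and therefore whether the decomposition extends to all $r\in\mathbb{Z}$ (odd $n$) or only to $r\in\mathbb{N}$ (even $n$), exactly as announced in the abstract; the reflected versus direct presentation records which of the two regimes of $r$ is in force. The same scheme would handle $B_{n}$, using its $J_{n}$-relation together with the power formula (4) in place of (2)--(3). The step I expect to be delicate is the index bookkeeping for the reflection: one must check that $J_{n}$ acts solely on the row index, that the passage $T_{i-1}\mapsto T_{n-i}$ is carried out consistently in both column regimes including the factor $(-1)^{j}$, and that the single weights attached to $\lambda_{2},\lambda_{3}$ are unaffected by the row reversal, since it is precisely here that an off-by-one or a misplaced reflection would corrupt the Chebyshev arguments.
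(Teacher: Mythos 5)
Your core derivation coincides with the paper's own proof. Both arguments rest on Lemma 2 in the form $A_{n}^{r}=J_{n}^{r}\widetilde{A}_{n}^{r}$ (the paper proves this by induction, you by commutativity of $J_{n}$ and $\widetilde{A}_{n}$ together with $J_{n}^{2}=I_{n}$ --- the same content), then quote the known entry formulas (2)--(3) for $\widetilde{A}_{n}^{r}$ (restated as (15)--(16) inside the paper's proof), and finally, for odd $r$, apply the row reversal $(J_{n}M)_{ij}=m_{n-i+1,j}$, which replaces $T_{i-1}(m_{k})$ by $T_{n-i}(m_{k})$ while leaving the column factor $T_{j-1}(m_{k})$, the prefactors, the sign $(-1)^{j}$, and the single weights on $\lambda _{2},\lambda _{3}$ untouched. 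That index bookkeeping, which you flagged as the delicate step, is exactly right and is all the paper's proof does.

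Where you diverge is your final paragraph, and there your argument fails. The dichotomy you correctly derive --- reflected formulas (11)--(12) for odd $r$, unreflected formulas (13)--(14) for even $r$ --- is governed by the parity of $r$, whereas the theorem as stated attributes it to the parity of $n$. The paper's own proof has the same tension: it splits on the parity of $r$ and never on $n$ (and its Example 5, which applies $J_{5}\widetilde{A}_{5}^{4}$ with even $r=4$, is actually inconsistent with Lemma 2). Your attempt to rescue the parity-of-$n$ phrasing through the spectrum is incorrect: the eigenvalues of $\widetilde{A}_{n}$ are $\lambda _{k}=a+2b\cos \left( \frac{(k-1)\pi }{n-1}\right) $, so whether $0$ is an eigenvalue depends on the values of $a$ and $b$, not on the parity of $n$ alone; indeed, for $a=0$ and $n$ odd the index $k=\frac{n+1}{2}$ gives $\lambda _{k}=2b\cos \left( \frac{\pi }{2}\right) =0$, so the matrix is \emph{singular} precisely in the case you claim admits all $r\in \mathbb{Z}$. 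Moreover, even where invertibility holds it would only govern the admissible range of $r$, not which of the two displays applies. Delete that paragraph and state the dichotomy by the parity of $r$, as your second paragraph already does; then your proof is exactly the paper's.
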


\begin{proof}
Let $A_{n}^{r}=(a_{ij}^{r})$ and $U=\widetilde{A}_{n}^{r}$. We obtained the
eigenvalues of\ $\widetilde{A}_{n}$ as%
\begin{equation*}
\lambda _{k}=a+2b\cos \left( \frac{(k-1)\pi }{n-1}\right) ,\ k=1,\ldots ,n\
(see\ [14,p.3])
\end{equation*}%
and the entries of the matrix $U$\ as%
\begin{eqnarray}
u_{ij}(r) &=&\frac{1}{2n-2}\left( \lambda
_{2}^{r}T_{i-1}(m_{2})T_{j-1}(m_{2})+\lambda
_{3}^{r}T_{i-1}(m_{3})T_{j-1}(m_{3})\right.   \notag \\
&&+2\dsum\limits_{\underset{k\neq 2,3}{k=1}}^{n}\lambda
_{k}^{r}T_{i-1}(m_{k})T_{j-1}(m_{k}));\ i=1,\ldots ,n;\ j=1,n  \label{15}
\end{eqnarray}%
and for $i=1,\ldots ,n$ and$\ j=2,\ldots ,n-1$%
\begin{eqnarray}
u_{ij}(r) &=&\frac{(-1)^{j}}{n-1}\left( 
\begin{array}{c}
\\ 
\lambda _{2}^{r}T_{i-1}(m_{2})T_{j-1}(m_{2})+\lambda
_{3}^{r}T_{i-1}(m_{3})T_{j-1}(m_{3}) \\ 
\end{array}%
\right.   \notag \\
&&\left. +2\dsum\limits_{\underset{k\neq 2,3}{k=1}}^{n}\lambda
_{k}^{r}T_{i-1}(m_{k})T_{j-1}(m_{k})\right)   \label{16}
\end{eqnarray}%
where $m_{k}=\frac{\lambda _{k}-a}{2b}\ $(see [14, p.]) and $T_{s}(.)$ is
the $s-$th degree Chebyshev polynomial of the first kind [13, p. 14]. If $r$
is even, then the equalities (15) and (16) are valid by the equality (10).
Let $r$ be odd number. If we multiply the equalities (15) and (16) by $J_{n}$
from left side, then we have%
\begin{equation*}
(J_{n}\widetilde{A}_{n}^{r})_{i,k}=\dsum%
\limits_{s=1}^{n}(J_{n})_{i,s}u_{s,k}(r)=u_{n-i+1,k}(r);\ k=1,\ldots ,n.
\end{equation*}%
Hence we obtain%
\begin{eqnarray*}
a_{n-i+1,j}^{r} &=&u_{n-i+1,j}(r) \\
&=&\frac{1}{2n-2}\left( \lambda _{2}^{r}T_{n-i}(m_{2})T_{j-1}(m_{2})+\lambda
_{3}^{r}T_{n-i}(m_{3})T_{j-1}(m_{3})\right.  \\
&&+2\dsum\limits_{\underset{k\neq 2,3}{k=1}}^{n}\lambda
_{k}^{r}T_{n-i}(m_{k})T_{j-1}(m_{k}));\ i=1,\ldots ,n;\ j=1,n
\end{eqnarray*}%
and%
\begin{eqnarray*}
a_{n-i+1,j}^{r} &=&u_{n-i+1,j}(r) \\
&=&\frac{(-1)^{j}}{n-1}\left( 
\begin{array}{c}
\\ 
\lambda _{2}^{r}T_{n-i}(m_{2})T_{j-1}(m_{2})+\lambda
_{3}^{r}T_{n-i}(m_{3})T_{j-1}(m_{3}) \\ 
\end{array}%
\right.  \\
&&\left. +2\dsum\limits_{\underset{k\neq 2,3}{k=1}}^{n}\lambda
_{k}^{r}T_{n-i}(m_{k})T_{j-1}(m_{k})\right) ;i=1,\ldots ,n;\ j=2,\ldots ,n-1.
\end{eqnarray*}
\end{proof}

\begin{theorem}
\bigskip Let $B_{n}$ be an $n\times n$ complex anti-tridiagonal matrix given
by (6), $\widetilde{V}=\widetilde{B}_{n}^{r}$ and $V=B_{n}^{r}.$ Then the
entries of $V$ are%
\begin{equation}
v_{ij}(r)=\left\{ 
\begin{array}{l}
\dsum\limits_{k=1}^{n}f_{k}\lambda _{k}^{r}T_{\frac{2i-1}{2}}\left( \frac{%
\lambda _{k}-a}{2b}\right) T_{\frac{2j-1}{2}}\left( \frac{\lambda _{k}-a}{2b}%
\right) ,\text{ if r is even} \\ 
\dsum\limits_{k=1}^{n}f_{k}\lambda _{k}^{r}T_{\frac{2(n-i)+1}{2}}\left( 
\frac{\lambda _{k}-a}{2b}\right) T_{\frac{2j-1}{2}}\left( \frac{\lambda
_{k}-a}{2b}\right) ,\text{ if r is odd}%
\end{array}%
\right.  \label{17}
\end{equation}%
where $i,j=1,\ldots ,n$ and 
\begin{equation*}
\lambda _{k}=a+2b\cos \left( \frac{(k-1)\pi }{n}\right) ,\ k=1,\ldots ,n
\end{equation*}%
are the eigenvalues of the matrix $\widetilde{B}_{n}.$
\end{theorem}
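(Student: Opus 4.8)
The plan is to mirror the proof of the preceding theorem for $A_{n}$, reducing $V=B_{n}^{r}$ to the already-known entrywise formula for $\widetilde{B}_{n}^{r}$ recorded in (4). Two ingredients are needed. First, the $B_{n}$-analogue of the reduction (10): as remarked after the reduction lemma, the same induction gives $B_{n}^{r}=\widetilde{B}_{n}^{r}$ when $r$ is even and $B_{n}^{r}=J_{n}\widetilde{B}_{n}^{r}$ when $r$ is odd. Second, the expression (4) for the entries $\widetilde{v}_{ij}(r)$ of $\widetilde{V}=\widetilde{B}_{n}^{r}$, in which the eigenvalues are $\lambda_{k}=a+2b\cos\!\big((k-1)\pi/n\big)$ and the weights are $f_{k}=2/n$ for $k<n$ and $f_{n}=1/n$. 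I would first confirm that the matrix $\widetilde{B}_{n}$ of (8) coincides with the one in the introduction whose power is given by (4), so that (4) may be applied verbatim.

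For the even case the argument is immediate: by the reduction, $V=\widetilde{V}$, hence $v_{ij}(r)=\widetilde{v}_{ij}(r)$ is exactly the first branch of (17), with no further work.

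For the odd case I would left-multiply $\widetilde{V}$ by $J_{n}$. Since $(J_{n})_{i,s}=\delta_{s,\,n-i+1}$, left multiplication reverses rows, giving
\[
v_{ij}(r)=(J_{n}\widetilde{V})_{ij}=\sum_{s=1}^{n}(J_{n})_{i,s}\,\widetilde{v}_{s,j}(r)=\widetilde{v}_{\,n-i+1,\,j}(r).
\]
Substituting $i\mapsto n-i+1$ into (4) leaves the second factor (the one carrying $j$) untouched and replaces the first Chebyshev index $\tfrac{2i-1}{2}$ by $\tfrac{2(n-i+1)-1}{2}$; the only computation is the elementary simplification $\tfrac{2(n-i+1)-1}{2}=\tfrac{2(n-i)+1}{2}$, which turns the expression into the second branch of (17).

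The substantive content is entirely inherited from (4) together with the reduction (10), so there is no genuine obstacle. The only points requiring care are the bookkeeping that $J_{n}$ acts as a \emph{row} reversal (not a column reversal, which would instead act on the second factor and the index $j$) and the index shift in the Chebyshev subscript; I expect these routine verifications to be the sole substance of the write-up.
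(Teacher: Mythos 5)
Your proposal is correct and follows essentially the same route as the paper: the even case is the $B_{n}$-analogue of the reduction (10) applied directly to formula (4), and the odd case left-multiplies by $J_{n}$ to replace $\widetilde{v}_{ij}(r)$ by $\widetilde{v}_{n-i+1,j}(r)$, yielding the Chebyshev index $\tfrac{2(n-i)+1}{2}$. Your explicit remarks that $J_{n}$ acts as a row (not column) reversal and that the $B_{n}$-version of (10) must be invoked (which the paper only gestures at via ``the same proof can be done easily for $B_{n}$'') are merely more careful bookkeeping of the identical argument.
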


\begin{proof}
\bigskip Since $B_{n}^{r}=\widetilde{B}_{n}^{r}$ from (10) for $r$ is even,
the equality (17) is valid. Let $r$ be odd number. If we multiply the
equality (4) by $J_{n}$ from the left side, then we have%
\begin{eqnarray*}
v_{i,j} &=&(J_{n}\widetilde{B}_{n}^{r})_{i,j}=\dsum%
\limits_{s=1}^{n}(J_{n})_{i,s}\widetilde{v}_{s,k}(r) \\
&=&\widetilde{v}_{n-i+1,j}(r) \\
&=&\dsum\limits_{k=1}^{n}f_{k}\lambda _{k}^{r}T_{\frac{2(n-i)+1}{2}}\left( 
\frac{\lambda _{k}-a}{2b}\right) T_{\frac{2j-1}{2}}\left( \frac{\lambda
_{k}-a}{2b}\right) .
\end{eqnarray*}
\end{proof}

\begin{corollary}
Let the matrix $A_{n}$ be as in (5). Then the eigenvalues of $A_{n}$ are%
\begin{equation}
\lambda _{k}=(-1)^{k}\left( a+2b\cos \left( \frac{(k-1)\pi }{n-1}\right)
\right) ,k=1,\ldots ,n.  \label{18}
\end{equation}
\end{corollary}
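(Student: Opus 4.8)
The plan is to read the spectrum of $A_{n}$ off the factorization $A_{n}=J_{n}\widetilde{A}_{n}=\widetilde{A}_{n}J_{n}$ supplied by Lemma 1, which says precisely that the exchange matrix $J_{n}$ and the tridiagonal matrix $\widetilde{A}_{n}$ commute. Because $J_{n}^{2}=I_{n}$, the matrix $J_{n}$ is an involution whose only eigenvalues are $\pm 1$, and two commuting matrices share an eigenbasis once $\widetilde{A}_{n}$ is diagonalizable; the latter holds here since the $n$ numbers $\cos\!\big((k-1)\pi/(n-1)\big)$, and hence the eigenvalues $\lambda_{k}=a+2b\cos\!\big((k-1)\pi/(n-1)\big)$ computed earlier for $\widetilde{A}_{n}$, are pairwise distinct. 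Thus for each $k$ there is a common eigenvector $v^{(k)}$ with $\widetilde{A}_{n}v^{(k)}=\lambda_{k}v^{(k)}$ and $J_{n}v^{(k)}=\varepsilon_{k}v^{(k)}$, whence $A_{n}v^{(k)}=J_{n}\widetilde{A}_{n}v^{(k)}=\varepsilon_{k}\lambda_{k}v^{(k)}$, and the whole problem reduces to computing the signs $\varepsilon_{k}\in\{+1,-1\}$.

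First I would record the cheap structural fact $A_{n}^{2}=J_{n}\widetilde{A}_{n}J_{n}\widetilde{A}_{n}=\widetilde{A}_{n}^{2}$, which already forces every eigenvalue of $A_{n}$ to equal $\pm\lambda_{k}$ for some $k$; the content of the statement is therefore entirely in the alternating pattern of the signs. To pin the sign down I would use the eigenvectors implicit in the spectral sums (15)--(16): the eigenvector belonging to $\lambda_{k}$ has $i$-th component proportional to $T_{i-1}(m_{k})$, where $m_{k}=(\lambda_{k}-a)/(2b)=\cos\theta_{k}$ with $\theta_{k}=(k-1)\pi/(n-1)$, so that $v^{(k)}_{i}=T_{i-1}(\cos\theta_{k})=\cos\!\big((i-1)\theta_{k}\big)$. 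Applying the flip gives $(J_{n}v^{(k)})_{i}=v^{(k)}_{n+1-i}=\cos\!\big((n-i)\theta_{k}\big)$, and since $(n-i)\theta_{k}=(k-1)\pi-(i-1)\theta_{k}$ the cosine subtraction identity collapses this to $(-1)^{k-1}\cos\!\big((i-1)\theta_{k}\big)$. Hence $\varepsilon_{k}=(-1)^{k-1}$, which supplies the alternating sign appearing in (18).

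The step I expect to be the real obstacle is exactly this sign bookkeeping, for two reasons. First, $\widetilde{A}_{n}$ is not symmetric, so one must be sure that $T_{i-1}(m_{k})$ is genuinely the right eigenvector (and not merely the symmetrised profile that shows up in the decomposition), and that the reflection $i\mapsto n+1-i$ returns a clean multiple of $v^{(k)}$ rather than a combination of several eigenvectors; the commuting relation together with the distinctness of the $\lambda_{k}$ is what guarantees the latter. Second, the global placement of the alternating factor is delicate, so I would cross-check it against the trace: for odd $n$ the only nonzero diagonal entry of the anti-tridiagonal matrix $A_{n}$ is its central one, giving $\mathrm{tr}(A_{n})=a$, and the computed spectrum must reproduce this value as $\sum_{k}\varepsilon_{k}\lambda_{k}$. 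Reconciling the parity computation, the trace check, and the index convention of (18) is where all the care lies; everything else is routine.
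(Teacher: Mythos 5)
Your strategy (common eigenbasis of the commuting involution $J_{n}$ and the diagonalizable $\widetilde{A}_{n}$, then computing the signs $\varepsilon_{k}$ with $A_{n}v^{(k)}=\varepsilon_{k}\lambda_{k}v^{(k)}$) is sound, and it is in fact more than the paper offers, since the paper's ``proof'' is only a citation to [3] and [14]. But there is a genuine gap at the decisive step, and it is exactly the one you flagged and then set aside: the eigenvector of $\widetilde{A}_{n}$ for $\lambda_{k}$ is \emph{not} $v^{(k)}_{i}=T_{i-1}(m_{k})$. Because of the interior $-b$ entries in (1), $\widetilde{A}_{n}$ is the all-positive comparison matrix (first row $(a,2b)$, last row $(2b,a)$, $tridiag(b,a,b)$ inside, whose eigenvectors genuinely are $(T_{i-1}(m_{k}))_{i}$) conjugated by a diagonal sign matrix $S=\mathrm{diag}(s_{1},\ldots,s_{n})$ with $s_{1}=s_{2}=1$, $s_{i}=(-1)^{i}$ for $2\le i\le n-1$, and $s_{n}=s_{n-1}$; the true eigenvectors are $v^{(k)}_{i}=s_{i}T_{i-1}(m_{k})$. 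For example, for $n=4$ and $\lambda=a+2b$ one finds $v=(1,1,-1,-1)$, not $(1,1,1,1)$. Your reflection computation then acquires the extra factor $s_{n+1-i}/s_{i}$, which is identically $+1$ when $n$ is odd ($S$ is palindromic) but identically $-1$ when $n$ is even ($S$ is anti-palindromic, so $J_{n}S=-SJ_{n}$). Hence $\varepsilon_{k}=(-1)^{k-1}$ holds only for odd $n$, while $\varepsilon_{k}=(-1)^{k}$ for even $n$; your parity-independent answer fails for even $n$, e.g.\ $J_{4}(1,1,-1,-1)^{T}=-(1,1,-1,-1)^{T}$, so $-(a+2b)\in\mathrm{spec}(A_{4})$. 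Your own trace cross-check, had you run it for even $n$, would have caught this: for even $n\ge 4$ one has $\mathrm{tr}(A_{n})=-2b$, which matches $\sum_{k}(-1)^{k}\lambda_{k}$ and not $\sum_{k}(-1)^{k-1}\lambda_{k}$.

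A second, separate problem is that your conclusion does not actually match the statement you set out to prove: you end with $\varepsilon_{k}=(-1)^{k-1}$ and assert this ``supplies the alternating sign appearing in (18),'' but (18) has $(-1)^{k}$, and the two spectra differ as multisets. Ironically, for odd $n$ your sign is the \emph{correct} one and the corollary as printed is wrong: for $n=3$, $a=1$, $b=3$ the matrix $A_{3}$ has characteristic polynomial $\mu^{3}-\mu^{2}-37\mu-35$ with spectrum $\{7,-1,-5\}$, whereas (18) gives $\{-7,1,5\}$; this is also confirmed by the paper's own Example 1, where $\mathrm{tr}(A_{3}^{3})=54+109+54=217=7^{3}+(-1)^{3}+(-5)^{3}$, while (18) would force the sum of cubes to be $-217$. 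The correct statement is $\lambda_{k}(A_{n})=(-1)^{k-1}\bigl(a+2b\cos\bigl(\frac{(k-1)\pi}{n-1}\bigr)\bigr)$ for odd $n$ and $\lambda_{k}(A_{n})=(-1)^{k}\bigl(a+2b\cos\bigl(\frac{(k-1)\pi}{n-1}\bigr)\bigr)$ for even $n$. Your argument, patched by inserting the sign vector $S$ and splitting by the parity of $n$, gives a complete and self-contained proof of this corrected statement — something the paper itself never supplies.
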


\begin{proof}
See [3, p.574] and [14, p. 5].
\end{proof}

\begin{corollary}
Let the matrix $B_{n}$ be as in (6). Then the eigenvalues of $B_{n}$ are%
\begin{equation}
\lambda _{k}=(-1)^{k-1}\left( a+2b\cos \left( \frac{(k-1)\pi }{n}\right)
\right)  \label{19}
\end{equation}%
where $k=1,\ldots ,n.$
\end{corollary}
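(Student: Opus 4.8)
The plan is to read off the spectrum of $B_{n}$ from the factorization $B_{n}=J_{n}\widetilde{B}_{n}$ supplied by Lemma 1, using the spectral data of the symmetric tridiagonal matrix $\widetilde{B}_{n}$ that was already computed (the eigenvalues $\lambda_{k}=a+2b\cos\left(\frac{(k-1)\pi}{n}\right)$ together with the Chebyshev form of its eigenvectors visible in (17)). The first point to record is that Lemma 1 gives more than the product formula: it also states $\widetilde{B}_{n}J_{n}=J_{n}\widetilde{B}_{n}$, so the involution $J_{n}$ commutes with $\widetilde{B}_{n}$. Since the numbers $\cos\frac{(k-1)\pi}{n}$, $k=1,\ldots,n$, are pairwise distinct (their arguments lie in $[0,\pi)$, where cosine is injective) and $b\neq0$, the eigenvalues $\lambda_{k}$ are distinct; hence each eigenspace of $\widetilde{B}_{n}$ is one-dimensional and is therefore invariant under the commuting operator $J_{n}$. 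Thus every eigenvector $v^{(k)}$ of $\widetilde{B}_{n}$ is also an eigenvector of $J_{n}$, necessarily with eigenvalue $\varepsilon_{k}\in\{+1,-1\}$ since $J_{n}^{2}=I$.

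Granting this, the eigenvalues of $B_{n}$ drop out at once:
\[
B_{n}v^{(k)}=J_{n}\widetilde{B}_{n}v^{(k)}=\lambda_{k}\,J_{n}v^{(k)}=\varepsilon_{k}\lambda_{k}\,v^{(k)},
\]
so the spectrum of $B_{n}$ is $\{\varepsilon_{k}\lambda_{k}\}$ and only the signs $\varepsilon_{k}$ remain to be identified. For this I would take the explicit eigenvector implicit in (17): the $i$-th entry of $v^{(k)}$ is $T_{\frac{2i-1}{2}}\!\left(\frac{\lambda_{k}-a}{2b}\right)=\cos\!\left(\frac{(2i-1)(k-1)\pi}{2n}\right)$, $i=1,\ldots,n$, where I have used $T_{\nu}(\cos\theta)=\cos(\nu\theta)$ together with $\frac{\lambda_{k}-a}{2b}=\cos\frac{(k-1)\pi}{n}$. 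Note that these entries are independent of $a,b$, so the argument is unaffected by $a,b$ being complex.

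Applying $J_{n}$ exchanges the $i$-th entry with the $(n-i+1)$-th, so I compare $v^{(k)}_{n-i+1}=\cos\!\left(\frac{(2n-2i+1)(k-1)\pi}{2n}\right)$ with $v^{(k)}_{i}$. Rewriting the argument as $(k-1)\pi-\frac{(2i-1)(k-1)\pi}{2n}$ and using $\cos\bigl((k-1)\pi-\theta\bigr)=(-1)^{k-1}\cos\theta$, I obtain $v^{(k)}_{n-i+1}=(-1)^{k-1}v^{(k)}_{i}$ for all $i$, that is $\varepsilon_{k}=(-1)^{k-1}$. Substituting into $\varepsilon_{k}\lambda_{k}$ gives precisely $(-1)^{k-1}\left(a+2b\cos\left(\frac{(k-1)\pi}{n}\right)\right)$, which is (19).

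I expect the sign bookkeeping in the last paragraph to be the only real obstacle: one must use the half-integer Chebyshev eigenvector belonging to $\widetilde{B}_{n}$ rather than the integer-index eigenvector belonging to $\widetilde{A}_{n}$, since it is exactly the parity of the reflection $i\mapsto n-i+1$ on these cosines that produces $(-1)^{k-1}$ (the matrix $A_{n}$ yields $(-1)^{k}$ in the companion corollary for the analogous reason). As a consistency check one can note $B_{n}^{2}=J_{n}\widetilde{B}_{n}J_{n}\widetilde{B}_{n}=J_{n}^{2}\widetilde{B}_{n}^{2}=\widetilde{B}_{n}^{2}$, which already forces the eigenvalues of $B_{n}$ to equal $\pm\lambda_{k}$ and confirms that determining the signs is all that remains.
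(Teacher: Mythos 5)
Your proof is correct, but it necessarily takes a different route from the paper, because the paper gives no argument at all for this corollary: its entire proof is the citation ``See [5, p.574] and [14, p.~]''. What you do instead is make the statement self-contained from material already in the paper. Lemma 1 supplies $J_{n}\widetilde{B}_{n}=\widetilde{B}_{n}J_{n}$ together with $B_{n}=J_{n}\widetilde{B}_{n}$; the distinctness of $\lambda_{k}=a+2b\cos\left(\frac{(k-1)\pi}{n}\right)$ (from $b\neq 0$ and injectivity of cosine on $[0,\pi)$) makes each eigenspace of $\widetilde{B}_{n}$ one-dimensional, hence $J_{n}$-invariant; and the half-integer Chebyshev eigenvector with entries $v^{(k)}_{i}=\cos\left(\frac{(2i-1)(k-1)\pi}{2n}\right)$ pins down the sign, since $\frac{(2(n-i)+1)(k-1)\pi}{2n}=(k-1)\pi-\frac{(2i-1)(k-1)\pi}{2n}$ yields $J_{n}v^{(k)}=(-1)^{k-1}v^{(k)}$ and therefore $B_{n}v^{(k)}=(-1)^{k-1}\lambda_{k}v^{(k)}$. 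I checked the trigonometric bookkeeping and it is right; moreover, since the $v^{(k)}$ belong to distinct eigenvalues of $\widetilde{B}_{n}$ they are linearly independent, so your list $(-1)^{k-1}\lambda_{k}$ is the complete spectrum of $B_{n}$ with multiplicities even if some of these values coincide for special complex $a,b$. Two minor observations: the one ingredient you take on trust from the paper is that these cosine vectors are eigenvectors of $\widetilde{B}_{n}$ (read off from (4)/(17), which the paper imports from [9]); a fully self-contained version would verify this by a one-line product-to-sum computation, including the corner rows where the diagonal entry is $a+b$, e.g. $(\widetilde{B}_{n}v^{(k)})_{1}=(a+b)v^{(k)}_{1}+bv^{(k)}_{2}=\lambda_{k}v^{(k)}_{1}$. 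Also, your abstract commutation-plus-simple-spectrum step is more machinery than strictly needed, since the explicit reflection computation already exhibits $v^{(k)}$ as a common eigenvector. What your approach buys is an actual verifiable argument inside the paper, and a transparent explanation of where the alternating sign $(-1)^{k-1}$ comes from (the parity of the reflection $i\mapsto n-i+1$ acting on the half-angle cosines), in contrast to the paper's bare outsourcing to references.
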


\begin{proof}
See [5, p.574] and [14, p. ].
\end{proof}

\section{\textbf{Numerical \ examples}}

Considering the Eqs. (11-14), we can find the arbitrary integer powers of
the $n\times n$ complex anti-tridiagonal matrix $A_{n}$ in (5), where $n$ is
positive odd integer.

\begin{example}
Let $n=3,r=3,a=1$ and $b=3.$\ Since%
\begin{equation*}
\widetilde{J}=diag(\lambda _{1},\lambda _{2},\lambda
_{3})=diag(a,a+2b,a-2b)=diag(1,7,-5)
\end{equation*}%
and%
\begin{equation*}
\widetilde{A}_{3}^{3}=(u_{ij}(3))=\left( 
\begin{array}{ccc}
55 & 234 & 54 \\ 
117 & 109 & 117 \\ 
54 & 234 & 55%
\end{array}%
\right) ,
\end{equation*}%
we obtain%
\begin{equation*}
A_{3}^{3}=J_{3}\widetilde{A}_{3}^{3}=\left( 
\begin{array}{ccc}
54 & 234 & 55 \\ 
117 & 109 & 117 \\ 
55 & 234 & 54%
\end{array}%
\right) .
\end{equation*}
\end{example}

\begin{example}
If $n=5,r=4,a=1$ and $b=3,$ then 
\begin{eqnarray*}
\widetilde{J} &=&diag(\lambda _{1},\lambda _{2},\lambda _{3},\lambda
_{4},\lambda _{5}) \\
&=&diag(1,7,-5,1+3\sqrt{2},1-3\sqrt{2})
\end{eqnarray*}%
and since%
\begin{equation*}
\widetilde{A}_{5}^{4}=(u_{ij}(4))=\left( 
\begin{array}{rrrrr}
595 & 672 & -756 & 216 & 162 \\ 
336 & 973 & -444 & 540 & 108 \\ 
-378 & -444 & 757 & -444 & -378 \\ 
108 & 540 & -444 & 973 & 336 \\ 
162 & 216 & -756 & 672 & 595%
\end{array}%
\right) ,
\end{equation*}%
we have%
\begin{equation*}
A_{5}^{4}=J_{5}\widetilde{A}_{5}^{4}=\left( 
\begin{array}{rrrrr}
162 & 216 & -756 & 672 & 595 \\ 
108 & 540 & -444 & 973 & 336 \\ 
-378 & -444 & 757 & -444 & -378 \\ 
336 & 973 & -444 & 540 & 108 \\ 
595 & 672 & -756 & 216 & 162%
\end{array}%
\right) .
\end{equation*}
\end{example}

\section{Complex Factorizations}

The well-known the Fibonacci polynomials$\ F(x)=\{F_{n}(x)\}_{n=1}^{\infty }$
are defined by $F_{n}(x)=xF_{n-1}(x)+F_{n-2}(x)$ with initial conditions $%
F_{0}(x)=0$ and $F_{1}(x)=1.$ For $x=1$ and $x=2,$\ then we obtain the
Fibonacci and Pell numbers as%
\begin{equation*}
F_{n}(1)=\{0,1,1,2,3,5,8,\ldots \}
\end{equation*}%
and%
\begin{equation*}
F_{n}(2)=\{0,1,2,5,12,29,\ldots \},
\end{equation*}%
respectively.

\begin{theorem}
\bigskip Let $A_{n}$ be an $n\times n$ complex anti-tridiagonal matrix given
by (5). If $a:=x$ and $b:=\mathbf{i}$, then%
\begin{equation*}
\det (A_{n})=\left\{ 
\begin{array}{r}
\ \ \ (x^{2}+4)F_{n-1}(x),\ n\equiv 0\ or\ 1\ \func{mod}4 \\ 
-(x^{2}+4)F_{n-1}(x),\ n\equiv 2\ or\ 3\ \func{mod}4%
\end{array}%
\right.
\end{equation*}%
where $\mathbf{i}=\sqrt{-1}.$
\end{theorem}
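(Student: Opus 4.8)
The plan is to route the whole computation through the factorization $A_{n}=J_{n}\widetilde{A}_{n}$ supplied by Lemma 1, because the determinant of a product splits multiplicatively: $\det (A_{n})=\det (J_{n})\,\det (\widetilde{A}_{n})$. This decouples the answer into a pure sign contributed by the reversal matrix $J_{n}$ and a value $\det (\widetilde{A}_{n})$ that will carry the factor $(x^{2}+4)F_{n-1}(x)$. I would deliberately avoid forming $\det (A_{n})$ directly from the eigenvalues (18) of $A_{n}$, since threading the sign through $\det (J_{n})$ is cleaner and insensitive to the sign convention used there.

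First I would evaluate $\det (\widetilde{A}_{n})$ as the product of the eigenvalues of $\widetilde{A}_{n}$ recorded in the proof of Theorem 3, namely $\lambda _{k}=a+2b\cos \frac{(k-1)\pi }{n-1}$. Setting $a=x$ and $b=\mathbf{i}$ gives $\det (\widetilde{A}_{n})=\prod_{k=1}^{n}\bigl(x+2\mathbf{i}\cos \frac{(k-1)\pi }{n-1}\bigr)$. The extreme factors $k=1$ and $k=n$ are $x+2\mathbf{i}$ and $x-2\mathbf{i}$, whose product is $x^{2}+4$; peeling them off leaves the central product $\prod_{j=1}^{n-2}\bigl(x+2\mathbf{i}\cos \frac{j\pi }{n-1}\bigr)$ after the shift $j=k-1$. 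To identify this with $F_{n-1}(x)$ I would invoke the complex product representation of the Fibonacci polynomials, $F_{m}(x)=\prod_{j=1}^{m-1}\bigl(x-2\mathbf{i}\cos \frac{j\pi }{m}\bigr)$, taken at $m=n-1$. The only discrepancy is the sign in front of $2\mathbf{i}$, and this is neutralised by the reflection identity $\cos \frac{j\pi }{n-1}=-\cos \frac{(n-1-j)\pi }{n-1}$: the involution $j\mapsto n-1-j$ permutes $\{1,\dots ,n-2\}$, so replacing $+2\mathbf{i}$ by $-2\mathbf{i}$ merely reorders the factors. Hence the central product equals $F_{n-1}(x)$ exactly and $\det (\widetilde{A}_{n})=(x^{2}+4)F_{n-1}(x)$.

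It remains to fix the sign $\det (J_{n})$. Since $J_{n}$ is the permutation matrix of the full reversal, its determinant is $(-1)$ raised to the number of transpositions, so $\det (J_{n})=(-1)^{\lfloor n/2\rfloor }$. A one-line check modulo $4$ shows $\lfloor n/2\rfloor$ is even exactly for $n\equiv 0,1\pmod 4$ and odd for $n\equiv 2,3\pmod 4$, giving $\det (J_{n})=+1$ and $-1$ in these two cases. Combining, $\det (A_{n})=(-1)^{\lfloor n/2\rfloor }(x^{2}+4)F_{n-1}(x)$, which is precisely the stated signed formula.

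I expect the central product step to be the main obstacle: it requires quoting (or deriving from the Binet form $F_{m}(x)=\frac{\alpha ^{m}-\beta ^{m}}{\alpha -\beta }$ with $\alpha \beta =-1$) the complex Chebyshev-type factorization of $F_{n-1}$, and then arguing carefully that flipping the sign inside $2\mathbf{i}\cos$ is harmless because the interior nodes are symmetric about $\frac{\pi }{2}$. One should in particular verify the self-paired central node when $n$ is odd, where $\cos \frac{\pi }{2}=0$ leaves the factor $x$ fixed under the involution, so no stray sign is created. The determinant of $J_{n}$ and the peeling of the two extreme eigenvalues are routine by comparison.
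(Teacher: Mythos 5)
Your proof is correct, but its computational core is genuinely different from the paper's. The two arguments share the outer frame: both reduce via Lemma 1 to $\det (A_{n})=\det (J_{n})\det (\widetilde{A}_{n})$ and both use $\det (J_{n})=(-1)^{\lfloor n/2\rfloor }$, which is $+1$ for $n\equiv 0,1\pmod 4$ and $-1$ for $n\equiv 2,3\pmod 4$ (the paper merely states this sign; you justify it by counting transpositions). For $\det (\widetilde{A}_{n})$, however, the paper works determinantally rather than spectrally: it Laplace-expands along the first two and last two rows to get $\det (\widetilde{A}_{n})=x^{2}D_{n-2}+4xD_{n-3}+4D_{n-4}$ with $D_{m}=\det (tridiag_{m}(-\mathbf{i},x,-\mathbf{i}))=F_{m+1}(x)$, and then extracts the factor $(x^{2}+4)$ using only the recurrence $F_{n-1}(x)=xF_{n-2}(x)+F_{n-3}(x)$. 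You instead take the product of the eigenvalues $\lambda _{k}=x+2\mathbf{i}\cos \frac{(k-1)\pi }{n-1}$ quoted in the proof of Theorem 3, peel off the pair $(x+2\mathbf{i})(x-2\mathbf{i})=x^{2}+4$, and identify the interior product with $F_{n-1}(x)$ via the factorization $F_{m}(x)=\prod_{j=1}^{m-1}\bigl(x-2\mathbf{i}\cos \frac{j\pi }{m}\bigr)$ combined with the reflection $j\mapsto n-1-j$; your treatment of the sign flip, including the self-paired node $\cos \frac{\pi }{2}=0$ for odd $n$, is sound. The trade-off is worth noting: the paper's route is self-contained modulo the tridiagonal identity from [14] and never touches the spectrum, whereas your route is shorter but consumes as an input essentially the identity that the paper afterwards \emph{derives} from this theorem as Corollary 10 (the complex factorization of $F_{n-1}(x)$). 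If you simply quote that product formula as known, Corollary 10 becomes circular in your logical ordering; your parenthetical Binet argument (the roots of the monic degree-$(m-1)$ polynomial $F_{m}$ are exactly $2\mathbf{i}\cos \frac{j\pi }{m}$, $j=1,\ldots ,m-1$) repairs this, so it should be made an explicit step of the proof rather than an aside.
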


\begin{proof}
Since $\widetilde{A}_{n}=J_{n}A_{n},$%
\begin{equation*}
\det (\widetilde{A}_{n})=x^{2}D_{n-2}+4xD_{n-3}+4D_{n-4}
\end{equation*}%
where $D_{n}=\det (tridiag_{n}(-\mathbf{i},x,-\mathbf{i}))$ and%
\begin{equation*}
\det (tridiag_{n}(-\mathbf{i},x,-\mathbf{i}))=F_{n+1}(x)\ (see\ [14,\ p.]),
\end{equation*}%
we arrive at%
\begin{eqnarray*}
\det (\widetilde{A}_{n}) &=&x^{2}F_{n-1}(x)+4xF_{n-2}(x)+4F_{n-3}(x) \\
&=&x^{2}(xF_{n-2}(x)+F_{n-3}(x))+4xF_{n-2}(x)+4F_{n-3}(x) \\
&=&(x^{2}+4)(xF_{n-2}(x)+F_{n-3}(x))=(x^{2}+4)F_{n-1}(x).
\end{eqnarray*}%
Since%
\begin{equation*}
\det (J_{n})=\left\{ 
\begin{array}{r}
1,\ \ n\equiv 0\ or\ 1\func{mod}4 \\ 
-1,n\equiv 2\ or\ 3\func{mod}4,%
\end{array}%
\right.
\end{equation*}%
the proof of theorem is completed.
\end{proof}

\begin{corollary}
Let $A_{n}$ be an $n\times n$ complex anti-tridiagonal matrix given by (5).
If $a:=x$ and $b:=\mathbf{i}$, then the complex factorization of generalized
Fibonacci-Pell numbers is the following form:%
\begin{equation*}
F_{n-1}(x)=\frac{\alpha }{x^{2}+4}\dprod\limits_{k=1}^{n}\left( x+2\mathbf{i}%
\cos \left( \frac{(k-1)\pi }{n-1}\right) \right)
\end{equation*}%
where%
\begin{equation*}
\alpha =\left\{ 
\begin{array}{l}
(-1)^{n},\ if\ n\equiv 0\ or\ 1\ \func{mod}4 \\ 
(-1)^{n-1},if\ n\equiv 2\ or\ 3\ \func{mod}4.%
\end{array}%
\ \right.
\end{equation*}
\end{corollary}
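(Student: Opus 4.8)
The plan is to use the standard fact that the determinant of a square matrix is the product of its eigenvalues, combining Corollary 5 (which lists the eigenvalues of $A_n$) with the theorem immediately above (which evaluates $\det(A_n)$ in closed form). Equating these two expressions for $\det(A_n)$ and solving for $F_{n-1}(x)$ will isolate the desired product, and the constant $\alpha$ will appear as the ratio of the sign factors carried by each side.

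First I would specialize Corollary 5 to $a = x$ and $b = \mathbf{i}$, so that the eigenvalues of $A_n$ are
\[
\lambda_k = (-1)^k\left(x + 2\mathbf{i}\cos\left(\frac{(k-1)\pi}{n-1}\right)\right),\qquad k = 1,\ldots,n.
\]
Taking the product over $k$ and pulling out the alternating signs, the identity $\sum_{k=1}^n k = n(n+1)/2$ gives
\[
\det(A_n) = (-1)^{n(n+1)/2}\prod_{k=1}^n\left(x + 2\mathbf{i}\cos\left(\frac{(k-1)\pi}{n-1}\right)\right).
\]
On the other hand, the preceding theorem supplies $\det(A_n) = \sigma\,(x^2+4)\,F_{n-1}(x)$, where $\sigma = +1$ for $n \equiv 0,1 \pmod 4$ and $\sigma = -1$ for $n \equiv 2,3 \pmod 4$. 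Equating the two and dividing through by $\sigma(x^2+4)$, and using $\sigma^{-1} = \sigma$, I obtain
\[
F_{n-1}(x) = \frac{(-1)^{n(n+1)/2}\,\sigma}{x^2+4}\prod_{k=1}^n\left(x + 2\mathbf{i}\cos\left(\frac{(k-1)\pi}{n-1}\right)\right),
\]
which is the asserted factorization with $\alpha = (-1)^{n(n+1)/2}\,\sigma$.

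The hard part will be the closing sign reconciliation: showing that $\alpha = (-1)^{n(n+1)/2}\sigma$ coincides with the piecewise expression stated in the corollary. This is a parity argument handled separately in each residue class: writing $n = 4m+j$ with $j \in \{0,1,2,3\}$, I would determine the parity of $n(n+1)/2$, multiply by the corresponding $\sigma$, and compare the result against $(-1)^n$ (for $j=0,1$) and $(-1)^{n-1}$ (for $j=2,3$). Because such factorizations are notoriously prone to an off-by-one in the exponent, I would anchor the bookkeeping with the small cases $n = 2,3,4,5$, where $F_{n-1}(x)$ equals $1,\,x,\,x^2+1,\,x^3+2x$ respectively and the endpoint factors $(x+2\mathbf{i})(x-2\mathbf{i}) = x^2+4$ visibly cancel the denominator, to confirm the sign convention before committing to the general formula.
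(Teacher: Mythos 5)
Your overall route is the same as the paper's: the paper's proof likewise specializes the eigenvalue formula (18) of Corollary 5 to $a:=x$, $b:=\mathbf{i}$, writes $\det(A_{n})$ as the product of eigenvalues, and combines this with the determinant theorem to solve for $F_{n-1}(x)$. The problem is the step you explicitly defer as ``the hard part'': it is not bookkeeping, it is false. Your sign computation from (18) is faithful --- $\prod_{k=1}^{n}(-1)^{k}=(-1)^{n(n+1)/2}$ --- so your constant is $\alpha=(-1)^{n(n+1)/2}\sigma$. This agrees with the corollary's printed $\alpha$ only for $n\equiv 0,1\pmod{4}$: for $n\equiv 2\pmod{4}$ you get $(-1)\cdot(-1)=+1$ against the printed $(-1)^{n-1}=-1$, and for $n\equiv 3\pmod{4}$ you get $(+1)\cdot(-1)=-1$ against the printed $(-1)^{n-1}=+1$. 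The paper lands on the printed $\alpha$ only because its proof never multiplies out the signs in (18): it quotes its equation (20), $\det(A_{n})=(-1)^{n}\prod_{k=1}^{n}\bigl(x+2\mathbf{i}\cos\frac{(k-1)\pi}{n-1}\bigr)$, whose prefactor $(-1)^{n}$ differs from your $(-1)^{n(n+1)/2}$ exactly when $n\equiv 2,3\pmod{4}$. So the residue-class reconciliation you promised cannot be carried out, and the proposal as written does not complete to a proof of the statement.

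Your own anchor cases, had you executed them, would have exposed this and shown the trouble is upstream of your argument. For $n=3$: $\prod_{k=1}^{3}\bigl(x+2\mathbf{i}\cos\frac{(k-1)\pi}{2}\bigr)=(x+2\mathbf{i})\,x\,(x-2\mathbf{i})=x(x^{2}+4)$ and $F_{2}(x)=x$, so the constant must be $+1$, while your formula gives $(-1)^{6}\cdot(-1)=-1$. For $n=5$ the product is $x(x^{2}+2)(x^{2}+4)$ against $F_{4}(x)=x^{3}+2x$, again forcing $+1$ --- contradicting both your $\alpha=-1$ and the corollary's own printed $(-1)^{5}=-1$. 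The root cause is a sign defect in (18) itself: for $n=3$, $a=1$, $b=3$ it predicts the spectrum $\{-7,1,5\}$, whereas the actual eigenvalues of $A_{3}$ are $7,-1,-5$ (consistent with $\det A_{3}=\det J_{3}\det\widetilde{A}_{3}=35$); the correct alternation is $(-1)^{k-1}$ there, not $(-1)^{k}$. Your derivation faithfully propagates the cited ingredient, so the error surfaces in your $\alpha$; the paper's proof instead asserts (20), which is inconsistent with (18). Any correct completion must first repair the sign convention in (18) (or derive (20) independently, e.g.\ from $\det(A_{n})=\det(J_{n})\det(\widetilde{A}_{n})$ with $\det(J_{n})=(-1)^{\lfloor n/2\rfloor}$), rather than massage parities at the end --- and doing so yields a constant that does not match the corollary's case split as printed.
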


\begin{proof}
Since the eigenvalues of the matrix $A_{n}$ from (18)%
\begin{equation*}
\lambda _{j}=(-1)^{j}\left( x+2\mathbf{i}\cos \left( \frac{(j-1)\pi }{n-1}%
\right) \right) ,\ j=1,\ldots ,n,
\end{equation*}%
the determinant of the matrix $A_{n}$ can be obtained as%
\begin{equation}
\det (A_{n})=(-1)^{n}\dprod\limits_{k=1}^{n}\left( x+2\mathbf{i}\cos \left( 
\frac{(k-1)\pi }{n-1}\right) \right) .  \label{20}
\end{equation}%
By considering (20) and Theorem 9, the complex factorization of generalized
Fibonacci-Pell numbers is obtained.
\end{proof}

\begin{theorem}
Let $B_{n}$ be an $n\times n$ complex anti-tridiagonal matrix given by (6).
If $a:=1$ and $b:=\mathbf{i}$, then 
\begin{equation}
\det (B_{n})=\left\{ 
\begin{array}{c}
(1+2\mathbf{i})F_{n},\ \ n\equiv 0\ or\ 1\func{mod}4 \\ 
-(1+2\mathbf{i})F_{n},n\equiv 2\ or\ 3\func{mod}4%
\end{array}%
\right.  \label{21}
\end{equation}%
and if $a:=2$ and $b:=\mathbf{i}$, then 
\begin{equation}
\det (B_{n})=\left\{ 
\begin{array}{c}
(2+2\mathbf{i})P_{n},\ \ n\equiv 0\ or\ 1\func{mod}4 \\ 
-(2+2\mathbf{i})P_{n},n\equiv 2\ or\ 3\func{mod}4%
\end{array}%
\right.  \label{22}
\end{equation}%
where $\mathbf{i}=\sqrt{-1}$ and $F_{n}$ and $P_{n}$ denote the nth
Fibonacci and Pell numbers, respectively.
\end{theorem}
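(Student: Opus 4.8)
The plan is to mirror the proof of Theorem~9, reducing the anti-tridiagonal determinant to a genuinely tridiagonal one via Lemma~1. Since $B_{n}=J_{n}\widetilde{B}_{n}$, we have $\det (B_{n})=\det (J_{n})\det (\widetilde{B}_{n})$, and because $\det (J_{n})=1$ for $n\equiv 0,1\ (\mathrm{mod}\ 4)$ while $\det (J_{n})=-1$ for $n\equiv 2,3\ (\mathrm{mod}\ 4)$ (as already recorded in the proof of Theorem~9), the entire $\pm$ sign appearing in (21) and (22) is accounted for by $\det (J_{n})$. It therefore suffices to show $\det (\widetilde{B}_{n})=(a+2b)E_{n-1}$, where $E_{m}:=\det (\mathrm{tridiag}_{m}(b,a,b))$, and then to evaluate $E_{n-1}$ at the two prescribed parameter choices.

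To compute $\det (\widetilde{B}_{n})$ I would first set up the standard tridiagonal recurrence $E_{m}=aE_{m-1}-b^{2}E_{m-2}$ with $E_{0}=1$ and $E_{1}=a$. Next, let $C_{m}$ denote the determinant of the $m\times m$ tridiagonal matrix that agrees with $\mathrm{tridiag}_{m}(b,a,b)$ except that a single end diagonal entry is replaced by $a+b$; a one-line cofactor expansion along that modified row gives $C_{m}=E_{m}+bE_{m-1}$. Expanding $\det (\widetilde{B}_{n})$ along its first row (whose corner entry is $a+b$) then yields $\det (\widetilde{B}_{n})=(a+b)C_{n-1}-b^{2}C_{n-2}$, and substituting $C_{m}=E_{m}+bE_{m-1}$ and telescoping with the recurrence collapses the expression to $(a+2b)E_{n-1}$. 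This telescoping is the main obstacle: the two perturbed corners generate several lower-order terms, and one must reuse $E_{n-1}=aE_{n-2}-b^{2}E_{n-3}$ to expose the cancellations that leave only $(a+2b)E_{n-1}$. (The identity is easily sanity-checked on $n=2,3$, where it returns $a(a+2b)$ and $(a+2b)(a^{2}-b^{2})$.)

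Finally I would specialise $a:=x$ and $b:=\mathbf{i}$. Then $-b^{2}=1$, so the recurrence for $E_{m}$ becomes $E_{m}=xE_{m-1}+E_{m-2}$ with $E_{0}=1$ and $E_{1}=x$, which is precisely the Fibonacci-polynomial recurrence shifted by one; hence $E_{m}=F_{m+1}(x)$ and in particular $E_{n-1}=F_{n}(x)$. This gives $\det (\widetilde{B}_{n})=(x+2\mathbf{i})F_{n}(x)$. Putting $x=1$ yields $a+2b=1+2\mathbf{i}$ and $F_{n}(1)=F_{n}$, which is (21); putting $x=2$ yields $a+2b=2+2\mathbf{i}$ and $F_{n}(2)=P_{n}$, which is (22). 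Multiplying through by $\det (J_{n})$ attaches the sign dictated by $n\bmod 4$, completing the proof.
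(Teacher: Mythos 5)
Your proposal is correct, and while it shares the paper's skeleton --- factor $B_{n}=J_{n}\widetilde{B}_{n}$, pull out $\det (J_{n})=\pm 1$ according to $n\bmod 4$, and evaluate the remaining tridiagonal determinant via a Fibonacci-type recurrence --- your evaluation of $\det (\widetilde{B}_{n})$ takes a genuinely different route. The paper performs a Laplace expansion along the first two and last two rows, producing the three-term identity (23), $\det (\widetilde{B}_{n})=(a+b)^{2}E_{n-2}-2b^{2}(a+b)E_{n-3}+b^{4}E_{n-4}$ in your notation, then substitutes $a:=1$, $b:=\mathbf{i}$ and collapses $(1+\mathbf{i})^{2}F_{n-1}+2(1+\mathbf{i})F_{n-2}+F_{n-3}$ to $(1+2\mathbf{i})F_{n}$ by Fibonacci-number identities, disposing of the Pell case with ``similarly.'' You instead expand along a single row, introduce the corner-perturbed determinant satisfying $C_{m}=E_{m}+bE_{m-1}$, and telescope to the general factorization $\det (\widetilde{B}_{n})=(a+2b)E_{n-1}$, valid for all complex $a,b$; the telescoping does go through as you claim, since $(a+b)(E_{n-1}+bE_{n-2})-b^{2}(E_{n-2}+bE_{n-3})=(a+b)E_{n-1}+b\left( aE_{n-2}-b^{2}E_{n-3}\right) =(a+2b)E_{n-1}$, and your direct checks at $n=2,3$ cover the small cases where the expansion formula would reference undefined $E_{m}$. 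This buys you two things the paper's computation does not: first, your identity is the determinantal shadow of the eigenvalue formula in Theorem 4 (the factor $a+2b$ is exactly the $k=1$ eigenvalue of $\widetilde{B}_{n}$, and $E_{n-1}$ is the product of the remaining $n-1$ eigenvalues), so it explains why the factorization is clean; second, the single specialization $a:=x$, $b:=\mathbf{i}$ gives $E_{n-1}=F_{n}(x)$ and hence the polynomial statement $\det (\widetilde{B}_{n})=(x+2\mathbf{i})F_{n}(x)$, from which (21) and (22) follow simultaneously at $x=1$ and $x=2$ --- i.e., you have actually proved the exact analogue of Theorem 9 for $B_{n}$, which the paper states only at the two numerical values and whose Pell half it leaves as an unworked ``similarly.'' The paper's route is marginally shorter per case; yours is uniform and strictly more general.
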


\begin{proof}
Applying Laplace expansion according to the first two and last two rows of
the determinant of $\widetilde{B}_{n}$, we have%
\begin{eqnarray}
\det (\widetilde{B}_{n}) &=&(a+b)^{2}\det (tridiag_{n-2}(b,a,b))  \label{23}
\\
&&-2b^{2}(a+b)\det (tridiag_{n-3}(b,a,b))  \notag \\
&&+b^{4}\det (tridiag_{n-4}(b,a,b))\ (see\ [9,\ p.]).  \notag
\end{eqnarray}%
If we take $a:=1$ and $b:=\mathbf{i}$ in (23), then we get%
\begin{eqnarray*}
\det (\widetilde{B}_{n}) &=&(1+\mathbf{i})^{2}\det (tridiag_{n-2}(\mathbf{i}%
,1,\mathbf{i})) \\
&&+2(1+\mathbf{i})\det (tridiag_{n-3}(\mathbf{i},1,\mathbf{i})) \\
&&+\det (tridiag_{n-4}(\mathbf{i},1,\mathbf{i}))\ (see\ [9,\ p.]) \\
&=&(1+\mathbf{i})^{2}F_{n-1}+2(1+\mathbf{i})F_{n-2}+F_{n-3} \\
&=&(1+2\mathbf{i})F_{n}.
\end{eqnarray*}%
Since%
\begin{equation*}
B_{n}=J_{n}\widetilde{B}_{n},
\end{equation*}%
we obtain (21).

Similar to the above, we can easily obtain Pell numbers.
\end{proof}

\begin{corollary}
Let $B_{n}$ be an $n\times n$ complex anti-tridiagonal matrix given by (6).
If $a:=1\ $and $b:=\mathbf{i}$, then the complex factorizations of Fibonacci
numbers are%
\begin{equation*}
F_{n}=(-1)^{n}\left\{ 
\begin{array}{l}
\ \ \dprod\limits_{k=2}^{n-1}\left( 1+2\mathbf{i}\cos \left( \frac{(k-1)\pi 
}{n}\right) \right) ,n\ is\ even \\ 
-\dprod\limits_{k=2}^{n}\left( 1+2\mathbf{i}\cos \left( \frac{(k-1)\pi }{n}%
\right) \right) ,n\ is\ odd%
\end{array}%
\right.
\end{equation*}%
and if $a:=2\ $and $b:=\mathbf{i,}$ then the complex factorizations of Pell
numbers are%
\begin{equation*}
P_{n}=(-1)^{n}\left\{ 
\begin{array}{l}
\ \ \dprod\limits_{k=2}^{n}\left( 2+2\mathbf{i}\cos \left( \frac{(k-1)\pi }{n%
}\right) \right) ,\ n\ is\ even \\ 
-\dprod\limits_{k=2}^{n}\left( 2+2\mathbf{i}\cos \left( \frac{(k-1)\pi }{n}%
\right) \right) ,\ n\ is\ odd.%
\end{array}%
\right.
\end{equation*}
\end{corollary}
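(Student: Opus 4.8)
The plan is to combine the two facts already available: the closed form $\det(B_n)=\pm(1+2\mathbf{i})F_n$ (respectively $\pm(2+2\mathbf{i})P_n$) established in (21)--(22), together with the explicit eigenvalues of $B_n$ recorded in (19), namely $\lambda_k=(-1)^{k-1}\left(a+2b\cos\left(\frac{(k-1)\pi}{n}\right)\right)$. Since the determinant of any square matrix equals the product of its eigenvalues counted with multiplicity, I would first write
\[
\det(B_n)=\prod_{k=1}^{n}\lambda_k=\left(\prod_{k=1}^{n}(-1)^{k-1}\right)\prod_{k=1}^{n}\left(a+2b\cos\left(\frac{(k-1)\pi}{n}\right)\right),
\]
and then specialize to $a=1,\ b=\mathbf{i}$ (for the Fibonacci case) and to $a=2,\ b=\mathbf{i}$ (for the Pell case), which turns the second product into exactly the factors appearing in the statement.

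Next I would split off the $k=1$ term. Because $\cos 0=1$, that factor equals $a+2b$, i.e. $1+2\mathbf{i}$ (respectively $2+2\mathbf{i}$), which is precisely the nonzero scalar multiplying $F_n$ (respectively $P_n$) in (21)--(22), so it can be cancelled. The alternating signs telescope to $\prod_{k=1}^{n}(-1)^{k-1}=(-1)^{n(n-1)/2}$, and a short case check modulo $4$ shows that $(-1)^{n(n-1)/2}$ coincides with the sign carried by the determinant in (21)--(22) ($+1$ when $n\equiv 0,1$ and $-1$ when $n\equiv 2,3$). Equating the two expressions for $\det(B_n)$ and cancelling the common factor $(1+2\mathbf{i})$ together with the matching signs then yields
\[
F_n=\prod_{k=2}^{n}\left(1+2\mathbf{i}\cos\left(\frac{(k-1)\pi}{n}\right)\right),
\]
and the identical computation with $1$ replaced by $2$ gives the corresponding formula for $P_n$.

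The remaining task is to rearrange this single product into the even/odd shape displayed in the statement, and this is the only place where genuine care is needed. When $n$ is even the index $k=\tfrac{n}{2}+1$ gives $\cos\tfrac{\pi}{2}=0$, so that factor equals $1$ and may be inserted or deleted freely; moreover the reflection $k\leftrightarrow n-k+2$ pairs each factor $1+2\mathbf{i}\cos\theta$ with $1-2\mathbf{i}\cos\theta$, so the product is real. These two observations are what allow one to shift the index range and attach the prefactor $(-1)^n$ (equal to $+1$ for even $n$, and absorbed by the inner minus sign for odd $n$, since $(-1)^n(-1)=(-1)^{n+1}=+1$ there). I would pin down the exact placement of endpoints and signs on the small cases $n=3$ and $n=4$, where the values $F_3=2$ and $F_4=3$ can be verified directly. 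The main obstacle is precisely this sign-and-index bookkeeping: once $\det(B_n)$ is written in the two ways above, the arithmetic content of the factorization is forced, so no new idea is required beyond the eigenvalue product and the determinant identities already in hand.
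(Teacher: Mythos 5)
Your overall route is exactly the paper's: write $\det(B_{n})$ as the product of the eigenvalues (19), note that $\prod_{k=1}^{n}(-1)^{k-1}=(-1)^{n(n-1)/2}$ reproduces precisely the mod-$4$ sign appearing in (21)--(22), and cancel the $k=1$ factor $a+2b$ (which is $1+2\mathbf{i}$, resp.\ $2+2\mathbf{i}$) against the scalar in front of $F_{n}$ (resp.\ $P_{n}$). Up to the identity $F_{n}=\prod_{k=2}^{n}\left(1+2\mathbf{i}\cos\left(\frac{(k-1)\pi}{n}\right)\right)$, and its Pell analogue, your argument is sound and coincides step for step with the proof in the paper (which reaches the same product with upper limit $n$ in both parity cases, the $(-1)^{n}$ prefactor and the inner minus sign for odd $n$ cancelling since $(-1)^{n}\cdot(-1)=+1$ there).

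The one genuine defect is your final re-indexing step for even $n$, and it cannot be repaired, because the even-$n$ Fibonacci formula as printed is a misprint: the upper limit should be $n$, exactly as in the Pell case and as the paper's own proof derives. Your proposed mechanism --- deleting the factor with $\cos\frac{\pi}{2}=0$ --- does not shift the endpoint: that trivial factor sits at $k=\frac{n}{2}+1$, whereas passing from $\prod_{k=2}^{n}$ to $\prod_{k=2}^{n-1}$ removes the $k=n$ factor $1+2\mathbf{i}\cos\left(\frac{(n-1)\pi}{n}\right)=1-2\mathbf{i}\cos\left(\frac{\pi}{n}\right)\neq 1$, so the two products are genuinely different (the truncated one is not even real). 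Your own proposed sanity check at $n=4$ exposes this: $\prod_{k=2}^{4}\left(1+2\mathbf{i}\cos\left(\frac{(k-1)\pi}{4}\right)\right)=(1+\mathbf{i}\sqrt{2})\cdot 1\cdot(1-\mathbf{i}\sqrt{2})=3=F_{4}$, while $\prod_{k=2}^{3}\left(1+2\mathbf{i}\cos\left(\frac{(k-1)\pi}{4}\right)\right)=1+\mathbf{i}\sqrt{2}\neq 3$. So the correct conclusion of your computation is $F_{n}=\prod_{k=2}^{n}\left(1+2\mathbf{i}\cos\left(\frac{(k-1)\pi}{n}\right)\right)$ for all $n$; you should flag the $n-1$ endpoint in the statement as an error rather than spend effort trying to prove it.
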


\begin{proof}
Let $a:=1\ $and $b:=\mathbf{i.}$ Since the eigenvalues of the matrix $B_{n}$
are%
\begin{equation*}
\lambda _{k}=(-1)^{n-1}\left( 1+2\mathbf{i}\cos \left( \frac{(k-1)\pi }{n}%
\right) \right)
\end{equation*}%
for $k=1,\ldots ,n$\ and the determinant of the matrix $B_{n}$ is equal to
multiplication of its eigenvalues, we get%
\begin{eqnarray*}
F_{n} &=&\frac{1}{1+2\mathbf{i}}\left\{ 
\begin{array}{c}
\det (B_{n}),\ \ n\equiv 0\ or\ 1\func{mod}4 \\ 
-\det (B_{n}),n\equiv 2\ or\ 3\func{mod}4%
\end{array}%
\right. \\
&=&\frac{1}{1+2\mathbf{i}}\left\{ 
\begin{array}{l}
\ \ \ (-1)^{n}\dprod\limits_{k=1}^{n}\left( 1+2\mathbf{i}\cos \left( \frac{%
(k-1)\pi }{n}\right) \right) ,\ n\ is\ even \\ 
(-1)^{n+1}\dprod\limits_{k=1}^{n}\left( 1+2\mathbf{i}\cos \left( \frac{%
(k-1)\pi }{n}\right) \right) ,\ n\ is\ odd%
\end{array}%
\right. \\
&=&(-1)^{n}\left\{ 
\begin{array}{l}
\ \ \dprod\limits_{k=2}^{n}\left( 1+2\mathbf{i}\cos \left( \frac{(k-1)\pi }{n%
}\right) \right) ,\ n\ is\ even \\ 
-\dprod\limits_{k=2}^{n}\left( 1+2\mathbf{i}\cos \left( \frac{(k-1)\pi }{n}%
\right) \right) ,\ n\ is\ odd.%
\end{array}%
\right.
\end{eqnarray*}%
We can easily obtain Pell numbers similarly for $a:=2\ $and $b:=\mathbf{i.}$

Thus, the proof is completed.
\end{proof}

\textbf{Acknowledgement. }The authors are partially supported by TUBITAK and
the Office of Sel\c{c}uk University Research Project (BAP).


\begin{thebibliography}{10}
\bibitem[1]{Jonas Rimas1} J. Rimas, Integer powers of real odd order
skew-persymmetric anti-tridiagonal matrices with constant anti-diagonals
(antitridiag$_{n}(a,c,-a),\ a\in \mathbb{R}-\{0\},\ c\in \mathbb{R)}$,
Applied Mathematics and Computation, 219, (2013) 7075-7088.

\bibitem[2]{Jonas Rimas2} J. Rimas, Integer powers of real even order
anti-tridiagonal Hankel matrices of the form antitridiag$_{n}(a,c,-a)$,
Applied Mathematics and Computation, 225, (2013) 204-215.

\bibitem[3]{Jonas Rimas3} J. Rimas, On computing of arbitrary positive
integer powers for one type of symmetric anti-tridiagonal matrices of odd
order, Applied Mathematics and Computation, 203, (2008) 573-581.

\bibitem[4]{Jonas Rimas4} J. Rimas, On computing of arbitrary positive
integer powers for one type of tridiagonal matrices, Applied Mathematics and
Computation, 161, (2005) 1037-1040.

\bibitem[5]{Jonas Rimas5} J. Rimas, On computing of arbitrary positive
integer powers for one type of tridiagonal matrices with elements $%
1,0,0,\ldots ,0,1$ in principal and $1,1,\ldots ,1$ in neighboring diagonals
-I, Applied Mathematics and Computation, 186, (2007) 1254-1257.

\bibitem[6]{Qingxiang Yin6} Q. Yin, On computing of arbitrary positive
powers for anti-tridiagonal matrices of even order, Applied Mathematics and
Computation, 203, (2008) 252-257.

\bibitem[7]{Haibo Wang7} H. Wang, Powers of complex persymmetric
antitridiagonal matrices with constant antidiagonals, Computational
Mathematics, 2014 (2014) 1-10.

\bibitem[8]{Mehmet Akbulak8} A. \"{O}tele\c{s} \& M. Akbulak, Positive
integer powers of certain complex tridiagonal matrices, Applied Mathematics
and Computation, 219, (2013) 10448-10455.

\bibitem[9]{Mehmet Akbulak9} A. \"{O}tele\c{s} \& M. Akbulak, Positive
integer powers of certain complex tridiagonal matrices, Mathematical Science
Letters, 2-1 (2013) 63-72.

\bibitem[10]{Mehmet Akbulak10} A. \"{O}tele\c{s} \& M. Akbulak, Positive
integer powers of one type of complex tridiagonal matrix, Bulletin of the
Malasian Mathematical Science Society, 2, 37(3) 2014, 1-\textit{xx}.

\bibitem[11]{Jesus Gutierrez11} Jesus Guti\'{e}rrez-Guti\'{e}rrez, Powers of
complex persymmetric or skew-persymmetric anti-tridiagonal matrices with
constant anti-diagonals, Applied Mathematics and Computation, 217, (2011)
6125-6132.

\bibitem[12]{Honglin Wu12} Honglin Wu, On computing of arbitrary positive
powers for one type of anti-tridiagonal matrices of even order, Applied
Mathematics and Computation, 217, (2010) 2750-2756.

\bibitem[13]{Mason13} J. C. Mason \& D. C. Handscomb, Chebyshev Polynomials,
CRC Press, \textit{Washington}, 2003.

\bibitem[14]{Durmus Bozkurt14} D. Bozkurt \& \c{S}. B. Bozkurt, Integer
powers of certain complex tridiagonal matrices and some complex
factorizations, http://arxiv.org/pdf/1403.6696v1.pdf.
\end{thebibliography}
\end{document}